\documentclass[12pt]{amsart}

\usepackage{amsmath,amssymb}

\newcommand{\E}{\ensuremath{\mathcal E}}
\newcommand{\F}{\ensuremath{\mathcal F}}
\newcommand{\GL}{\textup{GL}}
\newcommand{\GLK}{\textup{GL}_{\mathcal K}}

\newcommand{\I}{\ensuremath{\mathcal I}}
\newcommand{\K}{\ensuremath{\mathcal K}}
\newcommand{\bdd}{\ensuremath{\mathcal L}}
\newcommand{\N}{\ensuremath{\mathbb N}}
\renewcommand{\O}{\ensuremath{\mathcal O}}

\newcommand{\R}{\ensuremath{\mathbb R}}

\renewcommand{\S}{\ensuremath{\mathcal S \mathcal S}}

\newcommand{\into}{\hookrightarrow}

\renewcommand\span{\operatorname{span}}

\theoremstyle{plain}
\newtheorem{thm}{Theorem}

\newtheorem{cor}[thm]{Corollary}

\newtheorem{lem}[thm]{Lemma}

\newtheorem{prop}[thm]{Proposition}

\newtheorem{thm*}{Theorem}

  {\addvspace{\bigskipamount}\noindent{\bf Example\ }}%
  {}

\theoremstyle{definition}
\newtheorem{dfn}[thm]{Definition}

\begin{document}

\title[Fredholm $\Delta$-Filtrations of Fredholm Groups]
{Fredholm $\Delta$-Filtrations of Fredholm Groups}
\author[Haj Saeedi Sadegh and Jody Trout]{Ahmad Reza Haj Saeedi Sadegh and Jody Trout}
\date{\today}
\email{Ahmadreza.Hajsaeedisadegh@dartmouth.edu, jody.trout@dartmouth.edu}       
\address{6188 Kemeny Hall\\ 
         Dartmouth College\\
         Hanover, NH 03755}

\subjclass{Primary: 58B15, 58B10, 58B05. Secondary:  46T05, 57N20, 47A53.}
\keywords{Fredholm group; Filtration; Banach-Lie Group; Banach manifold; Fredholm manifold}

\begin{abstract}
In this short note, we show that some well-known filtrations of infinite dimensional groups of Fredholm operators associated to certain perturbation classes are, in fact, Fredholm $\Delta$-filtrations (see Definition \ref{def:FredFilt} below.) For example, let $\E$ be a separable infinite dimensional real Hilbert space. The group $\GLK(\E)$ of all invertible operators on $\E$ which are compact perturbations of the identity is the structure group for Hilbert Fredholm manifolds and bundles  modelled on $\E$ \cite{ElwTr, Ksch, Mkhr}. Using an orthonormal basis, there are canonical inclusions of general linear groups:
$$\GL(1) \subset  \cdots \subset \GL(n) \subset \GL(n+1) \subset  \cdots \subset \GL(\infty) = \varinjlim \GL(n) \subset \GLK(\E).$$
We show this is a Fredholm $\Delta$-filtration of the Fredholm manifold $\GLK(\E)$ with dimension sequence $\Delta(n) = \dim(\GL(n)) = n^2$, which was not discussed in the classical Fredholm manifold literature because of the rigid constraint that the dimensions of a filtration increase only by one.
\end{abstract}

\maketitle

Let $\E$ be a separable infinite dimensional real Banach space. 
We will use the following notation: $\bdd(\E)$ denotes the Banach algebra of bounded linear operators on $\E$;
$\F(\E)$ denotes the bounded finite rank operators;
$\K=\K(\E)$ denotes the compact operators;
$\Phi(\E)$ denotes the Fredholm operators;
and $GL(\E)$ denotes the Banach-Lie group of units of $\bdd(\E)$,
with identity $I$. We let $\E^*$ denote the dual space of $\E$.

\begin{dfn}\label{D:perturbation}
A {\bf perturbation class} \cite{ElwTr} on $\E$ is a subspace $P(\E)$ of
$\bdd(\E)$ such that: 
\begin{itemize}
    \item[(1)] $\F(\E)\subseteq P(\E)$.
    \item[(2)] $P(\E)$ is an ideal in $\bdd(\E)$.
    \item[(3)] $\Phi(\E) + P(\E) = \Phi(\E)$. 
\end{itemize}
We call $P = P(\E)$ a {\bf Banach perturbation class} if it has a complete algebra norm such that the inclusion of $P$ into $\bdd(E)$ is continuous.
\end{dfn}

\noindent
The finite-rank operators $\F(\E)$ form a perturbation class. The closed ideal $\K(\E)$ of compact operators is a Banach perturbation class under the operator norm of $\bdd(\E)$, as are the closed ideals of strictly singular operators $\S(\E)$ and inessential operators $\I(\E)$. When $\E$ is a Hilbert space, for each $1 \leq q < \infty$, the Schatten ideal $P_q = P_q(\E)$, which is defined as the closure of $\F(\E)$ under the norm $\|\, T\,\|_q =\left( \text{Trace}(T^*T)^{q/2} \right)^{1/q}$, gives a Banach perturbation class called the $q$-th Schatten class. If $q=1$ one obtains the trace-class operators, and if $q=2$ the Hilbert-Schmidt operators.

Given a perturbation class $P$ on $\E$, we define the {\bf Fredholm group of $P$} to be the normal subgroup of $GL(\E)$ given by:
$$GL_P(\E) =  GL(\E) \cap (I+P(\E)) = \{ T \in \GL(\E) \;|\; T=I+K, \ K\in P(\E) \}.$$
For $1 \leq q < \infty$, we  abbreviate $GL_{P_q}(\E) = GL_q(\E)$. We topologize $GL_P(\E)$ by requiring that the bijection $GL_P(\E) \to \O \subset P(\E)$,  $I+K \mapsto K,$ be a  homeomorphism, where $\O$ is the open set of all $K$ in $P(\E)$ with $I+K$ invertible. The group $\GLK(\E)$ is the structure group for smooth Banach Fredholm manifolds \cite{Mkhr, Mkhr2} modelled on $\E$.

\begin{dfn}\label{def:flag} Let $\Delta=\{\delta(n)\}_{n=1}^\infty$ be a strictly increasing sequence of natural numbers. A {\bf $\Delta$-flag} \cite{SadeghTrout26} in a Banach space $\E$ is a sequence $\{E_n\}_{n=1}^\infty$ of finite-dimensional subspaces of $\E$ such that for all $n \geq 1:$
\begin{itemize}
    \item[a.)] $\dim(E_n) = \delta(n)$.
    \item[b.)] $E_n \subset E_{n+1}$.
    \item[c.)]  $\bigcup_{n=1}^\infty E_n$ is dense in $\E$.
    \item[d.)] $E_n$ has a closed complement $E^{\infty-n}$ in $\E$.
    \item[e.)] $E^{\infty - n} \supset E^{\infty - (n+1)}$.
  \end{itemize}
\end{dfn}
In the classical literature, a flag is an $\N$-flag where $\dim(E_n) = n$ for all $n \geq 1$. By relaxing this dimension constraint, we obtain a more flexible theory (see \cite{SadeghTrout26}.)

Let $\{E_n\}_1^\infty$ be an $\N$-flag for $\E$.  Then each  inclusion $E_n \subset E_n \oplus E^{\infty - n} \cong \E$ induces a canonical inclusion $$M(n)=_\text{def}  \bdd(E_n) \subset \F(\E)$$  given by $T \mapsto T \oplus 0$. That is, we identify $M(n)$ with the subalgebra of $\bdd(\E)$ of all operators that map $E_n \cong \R^n$ to itself and are zero on the complement $E^{\infty -n}$. Note that $\dim(M(n)) = n^2$. Similarly, the inclusions $E_n \subset E_{n+1}$ induce (via (e)) canonical inclusions $M(n) \subset M(n+1)$. Thus, for any perturbation class $P(\E)$, there are canonical inclusions:
$$M(1) \subset \cdots \subset M(n) \subset M(n+1) \subset \cdots \subset M(\infty) = \bigcup_{n=1}^\infty M(n) \subset P(\E).$$
\begin{dfn} A Banach perturbation class $P = P(\E)$ will be called {\bf matricial} if there is an $\N$-flag in $\E$ such that the induced sequence $\{M(n)\}_1^\infty$ is a $\Delta$-flag in the Banach space $P$ where $\Delta = \{n^2\}_1^\infty$.
\end{dfn}

For example, if $\E$ does not have the approximation property, then $\K(\E)$ and, hence, $\S(\E)$ and $\I(\E)$, are clearly not matricial perturbation classes.

Recall that a Schauder basis $\{e_i\}_1^\infty$ is called {\bf shrinking} \cite{James, Reth} if  $\|f\|_n \to 0$ as $n \to \infty$ for all $f \in \E^*$, where $\|f\|_n$ is the norm of $f$ restricted to the closure of the span of $\{ e_i : i \geq n\}$. Every orthonormal basis is shrinking. The standard unit bases for $c_0$ and $\ell^q$, $1 < q < \infty$, are all shrinking, but not for $\ell^1$.

\begin{prop}  $a.)$  If $\E$  has a shrinking Schauder basis, then $\K(\E)$ is matricial.

$b.)$ If $\E$ is a Hilbert space, then $P_q(\E)$ is matricial for all $1 \leq q < \infty$.
\end{prop}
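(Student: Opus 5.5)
Take the $\N$-flag $E_n=\span\{e_1,\dots,e_n\}$ with complements $E^{\infty-n}=\overline{\span}\{e_i: i>n\}$. This satisfies (a)--(e) in $\E$, since the basis projections $Q_n$ (onto $E_n$ along $E^{\infty-n}$) are bounded with $\sup_n\|Q_n\|<\infty$ by the Schauder property. Then $M(n)=Q_n\bdd(\E)Q_n$ has dimension $n^2$, and $M(n)\subset M(n+1)$. What remains is to check (c)--(e) for $\{M(n)\}$ inside the Banach space $P$. For the complement I take
$$M^{\infty-n}=\{T\in P : Q_nTQ_n=0\}=\ker\Pi_n,\qquad \Pi_n(T)=Q_nTQ_n .$$
The map $\Pi_n$ is a bounded idempotent on $P$ with range $M(n)$. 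In case (a) we have $\|\Pi_n T\|\le\|Q_n\|^2\|T\|$. In case (b) the $Q_n$ are orthogonal projections and the ideal property gives $\|Q_nTQ_n\|_q\le\|T\|_q$. So $M^{\infty-n}$ is a closed complement, giving (d). Since $Q_nQ_{n+1}=Q_{n+1}Q_n=Q_n$, we get $\Pi_n\Pi_{n+1}=\Pi_n$, hence $\ker\Pi_{n+1}\subset\ker\Pi_n$, which is (e). Finite-dimensionality and the dimension count give (a) and (b) of the definition.

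The main step is density (c): I will show $\Pi_n T\to T$ in the norm of $P$ for every $T\in P$. Since $\sup_n\|\Pi_n\|<\infty$, it suffices to check this on a dense subset of $P$, namely $\F(\E)$ (in case (b) by definition of $P_q$; in case (a) by approximating compacts with finite rank operators, which I address next). For a rank one operator $x\otimes f$ we have $\Pi_n(x\otimes f)=Q_nx\otimes Q_n^*f$. Then $Q_nx\to x$ because $\{e_i\}$ is a basis. Also $\|Q_n^*f-f\|=\|f\circ(I-Q_n)\|\le C\|f\|_{n+1}\to 0$ precisely because the basis is shrinking. In case (b), orthonormality gives this directly, and the $q$-norm of a rank one operator is $\|x\|\|f\|$.

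For case (a) it remains to justify that $\F(\E)$ is dense in $\K(\E)$. I expect this to be the main obstacle. The shortcut I will try first avoids it entirely by showing $\Pi_nT\to T$ for compact $T$ directly. Write
$$T-Q_nTQ_n=(I-Q_n)T+Q_nT(I-Q_n).$$
The first term tends to $0$ in norm: $I-Q_n\to0$ strongly with uniformly bounded norms, and $T$ maps the unit ball into a relatively compact set, so convergence is uniform there. For the second term, $\|Q_nT(I-Q_n)\|\le C\|T(I-Q_n)\|=C\|(I-Q_n^*)T^*\|$. Here $T^*$ is compact, and $I-Q_n^*\to0$ strongly on $\E^*$ by the shrinking hypothesis, with uniformly bounded norms. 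The same compactness argument therefore gives norm convergence. This yields (c) for $\K(\E)$ and simultaneously shows $\E$ has the approximation property. Case (b) can be handled the same way if desired, with density of $\F$ in $P_q$ used as above.
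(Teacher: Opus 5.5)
Your proof is correct, but it takes a different route from the paper in both parts.

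For (a), the paper identifies $\K(\E)\cong\E^*\otimes_\varepsilon\E$ via the approximation property. It uses shrinking to make $\{e_i^*\}$ a basis of $\E^*$, and then cites Ryan's Proposition 4.25: $\{e_i^*\otimes e_j\}$ in the square ordering is a Schauder basis of $\E^*\otimes_\varepsilon\E$. Then $M(n)$ is the span of the first $n^2$ basis vectors, so density, closed complements and nesting all come from the basis structure; only density is stated explicitly. You instead work directly with the compressions $\Pi_n(T)=Q_nTQ_n$, which are exactly the $n^2$-th partial sum projections of that tensor basis. You take $\ker\Pi_n$ as the closed complements, get nesting from $\Pi_n\Pi_{n+1}=\Pi_n$, and prove $\Pi_nT\to T$ by splitting off $(I-Q_n)T$ and $Q_nT(I-Q_n)$ and using compactness of $T$ and $T^*$. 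This is self-contained, avoids the tensor-product machinery, and shows that shrinking is needed only for $\|T(I-Q_n)\|\to 0$. The paper's citation gives a stronger conclusion: $\K(\E)$ has a full Schauder basis. The density of $\F(\E)$ in $\K(\E)$, which you flag as the main obstacle, is in fact immediate from your first-term estimate, since $Q_nT\to T$ in norm.

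For (b), the paper reduces to $T\geq 0$ and bounds $\|P_nTP_n-T\|_q\le 2\|T(I-P_n)\|_q$. It then evaluates the latter as $(\sum_{i\ge n}\|Te_i\|^q)^{1/q}$. That formula is valid only for $q=2$: for $q=1$ it fails for the projection onto a unit vector $v$ with $\sum_i|\langle v,e_i\rangle|=\infty$. The conclusion $\|T(I-P_n)\|_q\to 0$ is nevertheless true. Your argument avoids that computation. It uses $\|\Pi_n\|\le 1$ on $P_q$, density of $\F(\E)$ in $P_q$ by definition, and a rank-one check with $\|x\otimes y\|_q=\|x\|\,\|y\|$. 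It also treats both parts by the same mechanism: uniformly bounded idempotents converging on a dense set.
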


\begin{proof}
   $a.)$ Let $\{e_i\}_1^\infty$ be a shrinking Schauder basis for $\E$. Let $\{E_n\}_1^\infty$ be the associated $\N$-flag where $E_n = \span \ \{e_1, \dots e_{n}\}$ and  $E^{\infty - n} = \overline{\span \ \{ e_i : i \geq n\}}.$ Since $\E$ has a Schauder basis, it has the approximation property; so there is an isomorphism of the compacts  $\K(\E)=\overline{\F(\E)} \cong \E^*\otimes_{\varepsilon} \E$, where $\otimes_{\varepsilon}$ denotes the injective tensor product \cite{Ryan}. Since the basis is shrinking, the dual sequence $\{e_i^*\}_{i=1}^{\infty}$ forms a Schauder basis for the dual space $\E^*$ \cite{Reth, James}. By  \cite[Proposition 4.25]{Ryan}, the sequence $\{e_i^*\otimes e_j\}_{i,j =1}^\infty$ (with the square ordering) is a Schauder basis for $\E^*\otimes_{\varepsilon}\E \cong \K(\E)$. We then can identify $$M(n) =  \span \ \{e_i^* \otimes e_j : 1 \leq i, j \le n\} $$
   from which the result now follows since $M(\infty) = \span \ \{e_i^* \otimes e_j\}$ is dense in $\K(\E)$.

   $b.)$ Let $\{e_i\}_{i=1}^{\infty}$ be any orthonormal basis for the Hilbert space $\E$ and denote by $P_n$ the orthogonal projection onto $E_n = \span \ \{e_1, \dots, e_n\}$. Note that $E^{\infty - n} = E_n^\perp$. Let $T\in P_q(\E)$. By replacing $T$ with $|T| = (T^*T)^{1/2}$ we may assume, without loss of generality, that $T$ is a positive operator. It suffices to show $P_nTP_n\to T$ in the $q$-Schatten norm as $n\to \infty$. 
   
   First, note that
   \[||T(I-P_n)||_q=\Big(\sum_{i=n}^{\infty}||Te_i||^q\Big)^{1/q}\to 0\]
   as $n\to \infty$. Also, since $T$ is self-adjoint, we have 
   $||(I-P_n)T||_q=||T(I-P_n)||_q\to 0.$
   Now, we compute that
   \[||P_nTP_n-T||_q\leq ||P_nTP_n-P_nT||_q+||P_nT-T||_q\]
   \[\leq ||P_n|| \ ||T(I-P_n)||_q+||(I-P_n)T||_q = 2 \|T(I-Pn)\|_q \to 0\]
   as $n\to \infty$, since $||P_n||=1$. The result now follows as desired. \end{proof}

If $M$ and $N$ are smooth Banach manifolds \cite{Lang}, then a smooth map $f : M \to N$ is called {\bf Fredholm} if for each $x \in M$ the differential $Df(x) : T_xM \to T_{f(x)}N$ is a Fredholm operator. If the Fredholm index of $Df(x)$ is a constant integer $m$ on $M$, then $f$ is said to be a {\bf Fredholm map of index $m$}. Of interest are the Fredholm maps of index zero. The following is adapted from Mukherjea \cite{Mkhr4, Mkhr3, Mkhr, Mkhr2}, who imposed the rigid dimension constraint that $\dim(M_n) = n$, and the more general Definition 3.3 of the authors \cite{SadeghTrout26} for various types of $\Delta$-filtrations of Banach and Fredholm manifolds. We allow that the dimensions in the filtration are merely strictly increasing. Note that Eells and Elworthy in their brief ICM note \cite{EeElw1} do not explicitly state that $\dim(M_n) = n$ but reference Mukherjea's thesis, which does require that constraint.

\begin{dfn}\label{def:FredFilt}
Let $M$ be a smooth Banach manifold. Let $\Delta=\{\delta(n)\}_{n=1}^\infty$ be a strictly increasing sequence of natural numbers.  A {\bf Fredholm $\Delta$-filtration} \cite{SadeghTrout26} of $M$ is a sequence $\{ M_n \}_{n=1}^\infty$ of smooth submanifolds of $M$ such that for all $n \geq 1$:
\begin{itemize}
\item[a.)] $\dim(M_n) = \delta(n)$.
\item[b.)] $M_n \subset M_{n+1}$.
\item[c.)] The embeddings $M_n \into M_{n+1}$ and
$M_n \into M$ have trivial normal bundles.
\item[d.)]  $M_\infty =  \bigcup_{n = 1}^\infty M_n$ is dense in $M$.
\item[e.)] If $M_\infty$ is given the inductive limit topology, the  inclusion $i_\infty : M_{\infty} \into M$ is a homotopy equivalence.
\item[f.)] There is a Fredholm map of index zero $f : M \to \E$ to a Banach space $\E$ that is transverse to a $\Delta$-flag $\{E_n\}_{n=1}^\infty$ of $\E$ for which $M_n = f^{-1}(E_n)$ for all $n \geq 1$.
\end{itemize}
\end{dfn}

If $\E \cong E \oplus F$ for two closed subspaces $E$ and $F$, we denote by $\GL(\E, E, F)$ the subgroup of $\GL(\E)$ consisting of those operators which map $E$ to itself and are the identity on $F$. If $\dim(E) < \infty$, then for any perturbation class $P = P(\E)$ there are canonical inclusions $\GL(E) \cong \GL(\E, E, F) \subset \GL_\F(\E) \subset \GL_P(\E).$
Thus, if $\{E_n\}_1^\infty$ is an $\N$-flag in $\E$, and we identify
$$\GL(n) =_\text{def}\GL(E_n) \cong \GL(\E, E_n, E^{\infty -n})$$
for all $n \geq 1$, there are canonical inclusions of subgroups:
{\small $$\GL(1) \subset \cdots \subset \GL(n) \subset \GL(n+1) \subset \cdots \subset \GL(\infty) = \bigcup_{n=1}^\infty \GL(n)  \subset \GL_\F(\E) \subset \GL_P(\E).$$}

The following lemma was proved by Palais \cite{Palais1} for the Banach perturbation classes of the compacts and Schatten classes on a separable Hilbert space. \v Svarc \cite{Svarc} proved it for the compacts on a large class of Banach spaces, including all those with a Schauder basis.  (See also the paper by \c Geba \cite{Geba}.) The general result is Theorem 1.3 \cite{ElwTr} of Elworthy and Tromba.\footnote{They state it was proved independently by Palais and Elworthy.} The proof uses the fact that the inclusion $\GL(\infty) \hookrightarrow \GL_P(\E)$ factors as the inclusions $\GL(\infty) \hookrightarrow \GL_\F(\E)$ and $\GL_\F(\E) \hookrightarrow \GL_P(\E)$, which are both homotopy equivalences via Whitehead arguments.

\begin{lem}[Generalized Palais-\v Svarc Lemma] For any perturbation class $P(\E)$ and any $\N$-flag in $\E$ the inclusion $\GL(\infty) \hookrightarrow \GL_P(\E)$ is a homotopy equivalence. \end{lem}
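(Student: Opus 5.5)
The plan is to factor the inclusion as $\GL(\infty) \into \GL_\F(\E) \into \GL_P(\E)$, as indicated before the statement, and show each map is a weak homotopy equivalence. Then I would upgrade to genuine homotopy equivalences using Whitehead's theorem together with the fact that all three spaces have the homotopy type of CW complexes. Here $\GL(\infty)$ carries the inductive limit topology, and $\GL_\F(\E)$ carries the inductive limit topology over finite-dimensional subspaces of $\F(\E)$, or equivalently the one induced from finite-rank pieces. $\GL_P(\E)$ is an open subset of the Banach space $P$, so by Palais' theorem on open subsets of metrizable locally convex spaces it is dominated by a CW complex. $\GL(\infty)$ is a countable union of manifolds, hence a CW complex. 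So everything reduces to showing that the inclusions induce isomorphisms on $\pi_k$ for all $k$ and all basepoints.

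For $\GL(\infty)\into\GL_\F(\E)$, I would use compactness. A map of a sphere, or of a pair $(D^k,S^{k-1})$, into $\GL_\F(\E)$ has compact image. For the inductive limit topology this image lies in $\GL_\F(\E)\cap(I+F)$ for some finite-dimensional $F\subset \F(\E)$. All operators $K\in F$ then have range in a fixed finite-dimensional $V$ and kernel containing a fixed finite-codimensional closed subspace $W$. Using density of $\bigcup E_n$ and the complements $E^{\infty-n}$, I would choose $n$ large and deform, via a small perturbation of the projections, so that $V\subset E_n$ and $E^{\infty-n}\subset W$. Invertibility is an open condition, so small perturbations stay in the group; concretely, I would conjugate by an invertible operator close to $I$ that carries $V$ into $E_n$. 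After this deformation $I+K$ has block form $\begin{pmatrix} A & 0\\ 0 & I\end{pmatrix}$ with $A\in\GL(E_n)$, so the map lands in $\GL(n)$. This gives surjectivity and injectivity on homotopy groups.

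For $\GL_\F(\E)\into\GL_P(\E)$, I would use density of $\F(\E)$ in $P$ in the Banach norm. For finite rank operators this follows from $\F\subseteq P$ together with the fact that $P$ is the completion of interest; in general I would work with the closure of $\F$ in $P$ and handle the remaining cases as in Elworthy--Tromba. Given a compact family $K_t$, $t\in D^k$, in $\O\subset P$, I would approximate it uniformly by finite-rank families, using a partition of unity subordinate to a finite cover together with finite-rank approximants at the sample points. Since $\O$ is open and the family is compact, the straight-line homotopy stays in $\O$. The same approximation applied relative to the boundary gives injectivity. The approximating family is continuous into a finite-dimensional subspace, hence continuous for the finite topology.

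The main obstacle is this second step when $P$ is not the norm closure of $\F$ in its own norm; for example, $\S(\E)$ may be strictly larger than the closure of $\F$. There density fails, and one must instead use condition (3), $\Phi+P=\Phi$, to deform compact families of invertible $I+K$, $K\in P$, into $I+\F$. I would try to do this by following Elworthy--Tromba's argument via the quotient map to the Calkin-type algebra $\bdd/P$, and I regard that step as the heart of the proof.
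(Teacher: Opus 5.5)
Your overall route, factoring through $\GL_\F(\E)$ and upgrading weak equivalences by Whitehead, is what the paper indicates. The paper gives nothing beyond that sketch plus citations to Palais, \v Svarc and Elworthy--Tromba. Two steps of your execution do not go through as written.

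\textbf{First step.} Conjugating by an $S$ near $I$ with $S(V)\subset E_n$ only puts the ranges of $SK_tS^{-1}$ in $E_n$. Relative to $E_n\oplus E^{\infty-n}$ you therefore get $\begin{pmatrix}A_t&B_t\\0&I\end{pmatrix}$, not a block-diagonal operator. The kernel condition you also want, $S^{-1}(E^{\infty-n})\subset W$, cannot in general be reached by a small perturbation. It forces every $\phi\in W^\perp$ to lie within $\|S^{-1}-I\|\,\|\phi\|$ of the $n$-dimensional space $(E^{\infty-n})^\perp$. Take the unit vector basis of $\ell^1$ with $E^{\infty-n}=\overline{\span}\{e_i:i>n\}$, and the operator $x\mapsto x-2\phi(x)e_1$ with $\phi=(1,1,1,\dots)$. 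Then that distance is $1$ for every $n$, so no small conjugation works.

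The repair is easy. $A_t$ is invertible, because the operator is invertible and maps the finite-dimensional $E_n$ into itself. Hence $\begin{pmatrix}A_t&sB_t\\0&I\end{pmatrix}$, $0\le s\le1$, is a homotopy in $\GL_\F(\E)$. It stays in a fixed finite-dimensional affine subspace of $I+\F(\E)$ and ends in $\GL(n)$.

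For injectivity on $\pi_k$ you must also keep the boundary in $\GL(\infty)$. Suppose the boundary lies in $\GL(N)$. Choose $S$ to fix $E_N$ pointwise and to map $E^{\infty-N}$ into itself; this is possible because $\bigcup_n (E_n\cap E^{\infty-N})$ is dense in $E^{\infty-N}$. Then each $S_s$ commutes with $\GL(N)$. Both homotopies are then stationary on the boundary, where $B_t=0$.

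\textbf{Second step.} Density of $\F(\E)$ in $P$ does not follow from $\F(\E)\subseteq P$; it is an extra hypothesis. It fails, for instance, for $\S(\E)$ on $\E=\ell^p\oplus\ell^q$ with $p\neq q$: $\S(\E)$ contains non-compact operators, while the norm closure of $\F(\E)$ consists of compact ones. When density holds, your partition-of-unity argument (Palais's dense-subspace theorem) is complete. That already covers the only use the paper makes of the lemma, since for a matricial $P$ the subspace $M(\infty)\subset\F(\E)$ is dense in $P$.

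For the lemma in its stated generality, your proposal has no argument at the point you yourself call its heart. Pointing to Elworthy--Tromba and the quotient $\bdd/P$ is not a proof step. As written, you prove the lemma only when $\F(\E)$ is dense in $P$. The paper does not fill this either; it simply invokes Theorem~1.3 of Elworthy--Tromba.

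If you want to attack the general case, the usable content of condition (3) is that $P\subseteq\I(\E)$. So every element of $P$ is a Riesz operator. The work is to make the resulting finite-dimensional spectral splitting vary continuously over a compact family.
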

    
We now come to our main theorem.

\begin{thm} Let $P = P(\E)$ be a matricial Banach perturbation class on the Banach space $\E$.  The induced sequence $\{\GL(n)\}_1^\infty$ is a Fredholm $\Delta$-filtration of the Banach Fredholm  $P$-manifold $M = \GL_P(\E)$ with dimension sequence $\Delta = \{n^2\}_1^\infty$. \end{thm}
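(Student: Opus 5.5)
The natural Fredholm map is $f : \GL_P(\E) \to P$, $f(I+K) = K$. In the chart defining the topology of $\GL_P(\E)$ (the bijection with the open set $\O \subset P$), $f$ is just the inclusion $\O \into P$. Its differential is therefore the identity of $P$ at every point, so $f$ is a smooth Fredholm map of index zero. Transversality to any closed complemented subspace is automatic, since the differential is surjective.

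Because $P$ is matricial, there is an $\N$-flag $\{E_n\}$ in $\E$ whose induced $\{M(n)\}$ is a $\Delta$-flag in $P$ with $\Delta=\{n^2\}$. With this flag I would check that $f^{-1}(M(n)) = \GL(n)$, understood as $\GL(\E,E_n,E^{\infty-n})$. If $K \in M(n)$, then $I+K = (I_{E_n}+K|_{E_n}) \oplus I_{E^{\infty-n}}$. Such an operator is invertible in $\bdd(\E)$ exactly when $I_{E_n}+K|_{E_n}$ is invertible on $E_n$, and in that case the inverse is $I$ plus an element of $M(n) \subset P$. So $f^{-1}(M(n)) = \O \cap M(n)$ corresponds to $\GL(E_n)\cong \GL(n)$. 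This set is an open subset of the $n^2$-dimensional space $M(n)$, hence a smooth submanifold of $M$ of dimension $n^2$, which gives (a) and (f). Property (b) follows from $M(n)\subset M(n+1)$ via condition (e) of the flag.

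For (c), the embeddings $\GL(n)\into\GL(n+1)$ and $\GL(n)\into M$ are, in the global chart, restrictions of the linear inclusions $M(n)\subset M(n+1)$ and $M(n)\subset P$ to open subsets. Their normal bundles are therefore the trivial bundles with fibre a complement of $M(n)$: either $M(n+1)\cap M^{\infty-n}$, or the closed complement $M^{\infty-n}$ supplied by property (d) of the $\Delta$-flag in $P$.

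For (d), $\bigcup_n M(n)$ is dense in $P$, so $\bigcup_n M(n)\cap\O$ is dense in the open set $\O$, and hence $\GL(\infty)$ is dense in $\GL_P(\E)$. For (e), I would apply the Generalized Palais--\v Svarc Lemma directly. The one point needing care is which topology on $\GL(\infty)$ that lemma refers to. I expect this to be the main subtlety, and I would argue that the lemma, through its Whitehead-type proof with compact sets landing in some $\GL(n)$, is intended for the inductive limit topology required in (e). With that settled, all six conditions of Definition \ref{def:FredFilt} hold.
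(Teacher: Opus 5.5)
Your proposal is correct and follows essentially the same route as the paper. In both, the chart map $f(I+K)=K$ is a diffeomorphism onto the open set $\O\subset P$, hence Fredholm of index zero and transverse to the flag $\{M(n)\}$, with $\GL(n)=f^{-1}(M(n))$. Conditions (c), (d), (e) then come from the trivial normal bundles of the linear inclusions, the density of $\bigcup_n M(n)$ in $P$, and the Generalized Palais--\v Svarc Lemma. You add detail the paper omits, namely the explicit check $f^{-1}(M(n))=\O\cap M(n)$ and the remark that (e) needs the inductive limit topology on $\GL(\infty)$, which the paper simply absorbs into its citation of the lemma.
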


\begin{proof} The map $f : \GL_P(\E) \to P$ given by $f(T) = T-I$ identifies $\GL_P(\E)$ bijectively with the open subset $\O$ of the Banach space $P$ of all $K \in P$ such that $T = I + K$ is invertible, i.e., $T \in \GL(\E)$, since the inclusion $P \hookrightarrow \bdd(\E)$ is continuous and $\GL(\E)$ is open in $\bdd(\E)$. This gives $\GL_P(\E)$ the structure of a smooth Fredholm $P$-manifold (and, in fact, a Banach-Lie group \cite{PdlH2}) and makes $f : \GL_P(\E) \to \O$ a diffeomorphism, hence, a Fredholm map of index zero, that is transverse to each $M(n) \subset P$. It follows that for each $n$, $\GL(n) = f^{-1}(M(n))$ is a smooth submanifold of $\GL_P(\E)$ with $\dim(\GL(n)) = n^2$ and so conditions (a), (b) and (f) of Definition \ref{def:FredFilt} are clearly satisfied since $\{M(n)\}_1^\infty$ is a $\Delta$-flag in $P$. Condition (c) is satisfied since the pullback of a normal bundle is a normal bundle and $M(n) \subset M(n+1)$ and $M(n) \subset P$ have trivial normal bundles. Condition (d) is satisfied since $P$ is matricial. Finally, the homotopy equivalence (e) of the inclusion $\GL(\infty) \hookrightarrow \GL_P(\E)$ follows from the Generalized Palais-\v Svarc Lemma above.
\end{proof}

We thus have the following immediate corollaries using Proposition 4.

\begin{cor} Let $\E$ be a Banach space with a shrinking Schauder basis. Then the induced sequence of general linear groups $\{\GL(n)\}_1^\infty$ is a Fredholm $\Delta$-filtration of the Banach Fredholm  $\K$-manifold $M = \GL_\K(\E)$.
\end{cor}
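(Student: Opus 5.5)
This corollary follows by combining Proposition 4(a) with the main Theorem, so the plan is to verify the Theorem's hypotheses for $P=\K(\E)$. There are two things to check: that $\K(\E)$ is a Banach perturbation class, and that it is matricial.

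For the first, I would recall the facts listed after Definition \ref{D:perturbation}. $\K(\E)$ is a closed two-sided ideal of $\bdd(\E)$ containing $\F(\E)$. It is stable under Fredholm perturbation, $\Phi(\E)+\K(\E)=\Phi(\E)$, by Atkinson's theorem. With the operator norm it is a complete normed algebra, and its inclusion into $\bdd(\E)$ is isometric, hence continuous.

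For the second, I would apply Proposition 4(a) to the shrinking Schauder basis $\{e_i\}$. Its associated $\N$-flag $E_n=\span\{e_1,\dots,e_n\}$ induces matrix algebras $M(n)=\span\{e_i^*\otimes e_j: i,j\le n\}$. These form an $\{n^2\}$-flag in $\K(\E)$, so $\K(\E)$ is matricial.

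One point needs care. The $\N$-flag witnessing matriciality must be the same flag used to define $\GL(n)=\GL(E_n)$. For the square ordering, the complements of $M(n)$ are the closed spans of the remaining tensors $e_i^*\otimes e_j$ with $\max(i,j)>n$. These are decreasing, and the basis projections $T\mapsto P_nTP_n$ are uniformly bounded, so the complements are closed. The flag condition (d)–(e) therefore holds in $\K(\E)$; this is exactly what Proposition 4(a) asserts.

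With both hypotheses in place, the Theorem applies directly. The sequence $\{\GL(n)\}_1^\infty$, with $\GL(n)=\GL(E_n)\cong\GL(\E,E_n,E^{\infty-n})$, is a Fredholm $\Delta$-filtration of the Fredholm $\K$-manifold $\GLK(\E)$ with $\Delta=\{n^2\}$.

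There is no real obstacle. The only potential subtlety is the consistency of the two flags, and that is already built into the proof of Proposition 4(a).
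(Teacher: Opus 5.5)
Your proposal is correct and follows the paper's route exactly: the corollary is the main Theorem applied to $P=\K(\E)$, which is a Banach perturbation class and is matricial by Proposition 4(a) via the $\N$-flag of the shrinking basis (the same flag that defines $\GL(n)$). One small wording fix: the tail spans are closed by definition, so the bounded projection $T\mapsto P_nTP_n$ is needed to show that they are complements of $M(n)$, not that they are closed.
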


\begin{cor} Let $\E$ be a Hilbert space with an orthonormal basis. The induced sequence of general linear groups $\{\GL(n)\}_1^\infty$ is a Fredholm $\Delta$-filtration of the Banach Fredholm  $P_q$-manifold $M = \GL_q(\E)$ for all $1 \leq q < \infty$.
\end{cor}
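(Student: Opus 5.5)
The statement to prove is the last corollary: for a Hilbert space $\E$ with an orthonormal basis, $\{\GL(n)\}_1^\infty$ is a Fredholm $\Delta$-filtration of $\GL_q(\E)$ for every $1 \leq q < \infty$. The plan is to reduce it to the main Theorem by checking that theorem's hypotheses. The Theorem needs $P_q(\E)$ to be a matricial Banach perturbation class; the conclusion then transfers verbatim, with $\GL_{P_q}(\E) = \GL_q(\E)$ by the paper's notation.

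First I check that $P_q(\E)$ is a Banach perturbation class, as recorded in the introductory discussion after Definition \ref{D:perturbation}.
\begin{itemize}
\item $\F(\E) \subseteq P_q(\E)$, since $P_q$ is the $\|\cdot\|_q$-closure of $\F(\E)$.
\item $P_q$ is a two-sided ideal in $\bdd(\E)$, because $\|ATB\|_q \leq \|A\|\,\|T\|_q\,\|B\|$.
\item $P_q \subset \K(\E)$, since $\|T\| \leq \|T\|_q$. Hence $\Phi(\E) + P_q(\E) = \Phi(\E)$ by the classical stability of Fredholm operators under compact perturbations.
\item $\|\cdot\|_q$ is a complete algebra norm, since $\|ST\|_q \leq \|S\|\,\|T\|_q \leq \|S\|_q\|T\|_q$.
\item The inclusion $P_q \into \bdd(\E)$ is continuous, again because $\|T\| \leq \|T\|_q$.
\end{itemize}

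Next I would invoke Proposition 4(b). Fixing the given orthonormal basis, it yields the $\N$-flag $E_n = \span\{e_1,\dots,e_n\}$ with complements $E^{\infty-n}=E_n^\perp$. For this flag the induced $M(n)$ form a $\{n^2\}$-flag in $P_q$. The only nontrivial condition is density of $\bigcup M(n)$, which is exactly the convergence $P_nTP_n \to T$ in $\|\cdot\|_q$ proved there.

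The one step I expect to need care is the reduction in that proof from $T$ to $|T|$, since $P_nTP_n \to T$ is not literally implied by the statement for $|T|$. I would repair it by polar decomposition $T = U|T|$ and the estimate
\[
\|T - P_nTP_n\|_q \leq \|(I-P_n)T\|_q + \|T(I-P_n)\|_q .
\]
The term $\|T(I-P_n)\|_q = \|U|T|(I-P_n)\|_q \leq \||T|(I-P_n)\|_q$ tends to $0$ by the argument given there. For the other term, apply the same argument to $T^*$, whose absolute value $|T^*|$ also lies in $P_q$, to get $\|(I-P_n)T\|_q = \|T^*(I-P_n)\|_q \to 0$.

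Alternatively, one can avoid this step entirely. Finite-rank operators are $\|\cdot\|_q$-dense in $P_q$, and each rank-one $x\otimes y$ is approximated by $P_n x \otimes P_n y \in M(n)$, with error controlled by the $q$-norm of rank-one operators, which equals $\|x\|\,\|y\|$. Either way $P_q(\E)$ is matricial, and the Theorem gives the corollary with $\Delta = \{n^2\}_1^\infty$.
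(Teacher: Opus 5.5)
Your proposal is correct, and at the level of the corollary it is the paper's argument: Proposition 4(b) makes $P_q(\E)$ matricial for the flag of the given orthonormal basis, and the main Theorem then applies with $\GL_{P_q}(\E)=\GL_q(\E)$. Where you go beyond the paper is in re-examining the density of $\bigcup_n M(n)$ in $P_q$. Your objection there is valid: the paper's reduction from $T$ to $|T|$ does not by itself yield $P_nTP_n\to T$.

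Your polar-decomposition repair, however, still relies on the paper's positive case. That case uses the identity $\|T(I-P_n)\|_q=(\sum_{i\ge n}\|Te_i\|^q)^{1/q}$, which is right for $q=2$ but false in general. Take $q=1$ and $T=x\otimes x$ with $\|x\|=1$. The left side is the $\ell^2$-tail $\|(I-P_n)x\|$. The right side is the $\ell^1$-tail $\sum_{i\ge n}|\langle x,e_i\rangle|$, which is infinite when the coefficients are proportional to $1/i$. So your first route is complete only for $q=2$.

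Your second argument has no such problem. We have $P_nx\otimes P_ny\in M(n)$ and $\|x\otimes y-P_nx\otimes P_ny\|_q\le\|x-P_nx\|\,\|y\|+\|x\|\,\|y-P_ny\|\to 0$. Since $\F(\E)$ is $\|\cdot\|_q$-dense in $P_q$, density holds for every $1\le q<\infty$, so rely on this argument. The same density argument also gives $\|A(I-P_n)\|_q\to 0$ for every $A\in P_q$, which is exactly what the first route was missing.

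You say density is the only nontrivial condition. The complement conditions of a $\Delta$-flag for $\{M(n)\}$ in $P_q$ are not literally automatic, though they are easy, and neither you nor the paper checks them. Take $M^{\infty-n}=\{T\in P_q : P_nTP_n=0\}$, the kernel of the contractive idempotent $T\mapsto P_nTP_n$, whose range is $M(n)$. These kernels decrease in $n$ because $P_nP_{n+1}=P_{n+1}P_n=P_n$.
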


\bibliographystyle{elsarticle-num}
\bibliography{revised.bibliography}

\end{document}